\newlength\longest
\begin{document}
	
	\newtheorem{theorem}{Theorem}[section]
	\newtheorem{lemma}[theorem]{Lemma}
	\newtheorem{corollary}[theorem]{Corollary}
	\newtheorem{proposition}[theorem]{Proposition}
	\newtheorem{conjecture}[theorem]{Conjecture}
	\newtheorem{problem}[theorem]{Problem}
	\newtheorem{claim}[theorem]{Claim}
	\theoremstyle{definition}
	\newtheorem{assumption}[theorem]{Assumption}
	\newtheorem{fact}[theorem]{Fact}
	\newtheorem{remark}[theorem]{Remark}
	\newtheorem{definition}[theorem]{Definition}
	\newtheorem{example}[theorem]{Example}
	\theoremstyle{remark}
	\newtheorem{notation}{Notasi}
	\renewcommand{\thenotation}{}

\title{A Note on the Formula for the $g$-Angle between Two Subspaces of a Normed Space}
\author{M. Nur${}^{1}$\footnote{\emph{Permanent Address}:
		Department of Mathematics, Hasanuddin University,
		Jl. Perintis Kemerdekaan KM 10, Makassar 90245, Indonesia}, and H. Gunawan${}^{2}$ }
\affil{${}^{1,2}$Analysis and Geometry Group, Faculty of Mathematics and\\
	Natural Sciences, Bandung Institute of Technology,\\
	Jl. Ganesha 10, Bandung 40132, Indonesia\\
\bigskip
E-mail: ${}^{1}$muhammadnur@unhas.ac.id, ${}^{2}$hgunawan@math.itb.ac.id,
}
\date{}

\maketitle
\begin{abstract}
\noindent We introduce a new $2$-norm on a normed space using a semi-inner product $g$ on the
space. Using the $2$-norm, we propose a formula for the $g$-angle between $2$-dimensional subspaces
in the space. Our formula serves as a revision of the one proposed by Nur {\it et al.} \cite{Nur1}.

\bigskip{}
\noindent {\bf Keywords}: $2$-norms, $g$-angles, subspaces, normed spaces.\\
	{\textbf{MSC 2010}}: 15A03, 46B20, 51N15.

\bigskip{}
\end{abstract}

\section{Introduction}

In an inner product space $(X,\left\langle\cdot,\cdot\right\rangle)$, we can calculate the
angle $A(x,y)$ between two nonzero vectors $x$ and $y$ in $X$ via the formula
$A(x,y) :=\frac{\left\langle x,y\right\rangle}{\|x\| \| y\| },$
where $\| x\|:=\left\langle x,x\right\rangle^\frac{1}{2}$ denotes the induced norm in $X$.
In 2005, Gunawan {\it et al.} \cite{Gunawan4} presented a formula for the angle between an
$n$-dimensional subspace and an $m$-dimensional subspace of $X$ (with $m\ge n$) by using
the so-called standard $n$-norm on $X$.

Here we shall formulate the angle between 2-dimensional subspaces of a normed space, using
a semi-inner product on the space. Let $(X,\| \cdot \| )$ be a normed space.
The functional $g:X^{2}\rightarrow \mathbb{R}$ defined by the formula
$$
g(x,y):=\frac{1}{2}\left\Vert x\right\Vert \left[ \tau _{+}(x,y)+\tau _{-}(x,y)\right],
$$
with
\begin{equation*}
\tau _{\pm }(x,y):=\lim_{t \rightarrow \pm 0}\frac{\| x+ty\| -\| x\| }{t},
\end{equation*}
clearly satisfies the following properties:
\begin{enumerate}
\item[(1)] $g(x,x) =\| x\|^{2}$ for every $x\in X$;

\item[(2)] $g(ax,by)=ab\cdot g(x,y)$ for every $x,y\in X$ and $a,b\in \mathbb{R}$;

\item[(3)] $g(x,x+y)=\| x\|^{2}+g(x,y)$for every $x,y\in X$;

\item[(4)] $| g(x,y)| \leq \|x\|\cdot\|y\|$ for every $x,y\in X$.
\end{enumerate}

If, in addition, the functional $g(x,y)$ is linear in $y$, then $g$ is called
a \emph{semi-inner product} on $X$. For example, the functional
\begin{equation*}
g(x,y):=\|x\|_{p}^{2-p}\sum |\xi
_{k}|^{p-1}\text{sgn}\left( \xi _{k}\right) \eta _{k},\text{ \ \ }%
x:=\left( \xi _{k}\right), y:=\left( \eta _{k}\right) \in l^{p}
\end{equation*}
is a semi-inner product on $\ell^{p}\ (1\le p<\infty)$ \cite{Giles,Gunawan3}.
Note that on an inner product space, the functional $g(x,y)$ is identical
with the inner product $\left\langle\cdot,\cdot\right\rangle$. 

Using a semi-inner product $g$ on $X$, many researchers have studied the $g$-angle between two vectors, see,
for example \cite{Balestro,Gunawan3,Milicic1,Nur0}. Recently, Nur {\it et al.} \cite{Nur1}
formulated the $g$-angle between two subspaces of $X$. If $U=\text{span}\{u\}$ is a
$1$-dimensional subspace and $V=\text{span}\{v_{1},\cdots,v_{m}\}$ is an $m$-dimensional
subspace of $X$ with $m\ge 1$, then the $g$-angle between $U$ and $V$ is defined
by $A_{g}(U,V)$ with $\cos ^{2}A_{g}(U,V) =\frac{\|u_{V}\|^{2}}{\|u\|^{2}}$. In this formula,
$u_{V}$ denotes the $g$-orthogonal projection of $u$ on $V$. Likewise, if
$U=\text{span}\{u_{1},u_{2}\}$ is a $2$-dimensional subspace and $V=\text{span}\{v_{1},
\cdots,v_{m}\}$ is an $m$-dimensional subspace of $X$ with $m\ge 2$, then the $g$-angle
between $U$ and $V$ is defined by $\cos ^{2}A_{g}(U,V)=\frac{(\Lambda_{p} (u_{1V},u_{2V}))^{2}}
{(\Lambda_{p}(u_{1},u_{2}))^{2}}$, where $u_{iV}$'s denote the $g$-orthogonal projection of
$u_{i}$'s on $V$, with $i=1,2$. This formula, however, depends on the choice of the basis for
$U$, which is undesirable.

In this article, we will define a new $2$ norm on $X$ using a semi-inner product $g$.
Recall that a $2$-norm on a real vector space $X$ is a mapping
$\| \cdot,\cdot \| :X\times X\longrightarrow \mathbb{R}$ which satisfies the following four
conditions:
\begin{enumerate}
	\item[(1)] $\| x,y\| =0$ if and only if $x,y$ are linearly dependent;
	
	\item[(2)]  $\| x,y\|$ is invariant under permutation;
	
	\item[(3)]  $\|\alpha  x,y\| =|\alpha| \|  x,y|$ for every $x,y\in X$ and for every $\alpha \in\mathbb{R}$;
	
	\item[(4)]  $\|x,y+z\| \leq \|x,y\|+\|x,z\| $ for every $x,y,z\in X$.
\end{enumerate}
The pair $(X,\left\Vert \cdot,\cdot \right\Vert )$ is called a \emph{$2$-normed space}.
Geometrically, $\|x,y\|$ may be interpreted as the area of the 2-dimensional parallelepiped
spanned by $x$ and $y$. The theory of $2$-normed spaces was first developed by G\"{a}hler
\cite{Gahler1} in the mid 1960's. Recent results can be found, for example, in \cite{Ekariani1,Gunawan1,Gunawan2}.

Using a 2-norm, we will formulate the $g$-angle between two 2-dimensional subspaces of $X$, which serves as
a revision of Nur {\it et al.}'s formula.

\section{Main Results}

\subsection{A new $2$-norm} In this section, we will present a new $2$-norm on a real normed space $(X,\|\cdot\|)$.
(Unless otherwise stated, we shall always assume that $X$ is a normed space.)
Let $g(\cdot,\cdot)$ be a semi-inner product on $X$.
We define the mapping $\|\cdot,\cdot\|_{g}$ on $X$ by
\begin{equation}\label{2norm}
\left\Vert x_{1},x_{2}\right\Vert _{g}=\sup_{y_{j}\in X,\left\Vert
	y_{j}\right\Vert \leq 1}\left\vert
\begin{array}{cc}
g(y_{1},x_{1}) & g(y_{2},x_{1}) \\
g(y_{1},x_{2}) & g(y_{2},x_{2})%
\end{array}%
\right\vert.
\end{equation}

The following fact tells us that $\|\cdot,\cdot\|_{g}$ makes sense.

\bigskip

\begin{fact}\label{fact}
 	The inequality
 	\begin{equation*}
 	\|x_{1},x_{2}\|_{g}\le 2\|x_{1}\|\|x_{2}\|
 	\end{equation*}
 holds	for every $x_{1},x_{2}\in X$.
\end{fact}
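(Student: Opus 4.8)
The plan is to bound the $2\times 2$ determinant appearing inside the supremum pointwise, uniformly over all admissible $y_1,y_2$, and then to pass to the supremum. First I would fix an arbitrary pair $y_1,y_2\in X$ with $\|y_1\|\le 1$ and $\|y_2\|\le 1$, and expand the determinant as
$$g(y_1,x_1)\,g(y_2,x_2)-g(y_2,x_1)\,g(y_1,x_2).$$
Applying the triangle inequality to its absolute value gives the upper bound $|g(y_1,x_1)|\,|g(y_2,x_2)|+|g(y_2,x_1)|\,|g(y_1,x_2)|$, so the problem reduces to estimating each of the four factors separately.

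Next I would invoke property (4) of the functional $g$, namely the Cauchy--Schwarz-type estimate $|g(x,y)|\le\|x\|\,\|y\|$, applied to each factor. Because $\|y_1\|\le 1$ and $\|y_2\|\le 1$, every factor of the form $|g(y_i,x_j)|$ is dominated by $\|x_j\|$, so each of the two products in the bound above is at most $\|x_1\|\,\|x_2\|$. Summing the two products yields the uniform estimate $2\|x_1\|\,\|x_2\|$, which does not depend on the particular choice of $y_1,y_2$. Since this holds for every admissible pair in the closed unit ball, taking the supremum on the left preserves the inequality and gives exactly $\|x_1,x_2\|_g\le 2\|x_1\|\,\|x_2\|$.

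I do not expect a genuine obstacle here: once property (4) is available, the argument is a single estimate, and the factor $2$ emerges transparently from the two summands of the determinant expansion. The only point worth a moment's care is that the supremum defining $\|x_1,x_2\|_g$ is finite in the first place, but this is precisely what the uniform bound establishes --- so the very computation that proves the inequality simultaneously justifies that $\|\cdot,\cdot\|_g$ is well defined, as the surrounding discussion anticipates.
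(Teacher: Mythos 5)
Your proposal is correct and follows essentially the same route as the paper: expand the determinant, apply the triangle inequality, bound each factor via property (4) of $g$ together with $\|y_j\|\le 1$, and pass to the supremum. The paper's proof merely compresses these steps into a single line, so your version simply makes explicit the use of $|g(x,y)|\le\|x\|\,\|y\|$ that the paper leaves implicit.
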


\begin{proof}
Let $y_{1}, y_{2}\in X$ with $\|y_{1}\|\le 1$ and $\|y_{2}\|\le 1$. By the triangle inequality
for real numbers, we have
\begin{equation*}
g(y_{1},x_{1})g(y_{2},x_{2})-g(y_{2},x_{1})g(y_{1},x_{2})\le 2\|x_{1}\|\|x_{2}\|.
\end{equation*}
Hence, $\|x_{1},x_{2}\|_{g}\le 2\|x_{1}\|\|x_{2}\|$.
\end{proof}

Moreover, we have the following result.

\bigskip

\begin{proposition}\label{norma2}
The mapping (\ref{2norm}) defines a $2$-norm on $X$.
\end{proposition}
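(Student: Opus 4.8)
The plan is to verify the four defining axioms of a $2$-norm for the mapping $\|\cdot,\cdot\|_{g}$ in (\ref{2norm}). Finiteness of the supremum, hence well-definedness of the mapping, is already guaranteed by Fact~\ref{fact}. The whole argument will play off the interaction between two ingredients: the algebra of the $2\times 2$ determinant sitting inside the supremum, and the four listed properties of $g$ — most importantly its linearity in the second slot and the homogeneity $g(ax,by)=ab\,g(x,y)$.

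I would first dispose of axioms (2), (3), and (4), since each reduces to a standard determinant identity. For homogeneity (3), replacing $x_{1}$ by $\alpha x_{1}$ multiplies the top row of the determinant by $\alpha$ (by linearity of each $g(y_{j},\cdot)$), hence scales the determinant by $\alpha$; applying $|\cdot|$ and the supremum yields $|\alpha|\,\|x_{1},x_{2}\|_{g}$. For permutation invariance (2), interchanging $x_{1}$ and $x_{2}$ swaps the two rows and only flips the sign of the determinant, which the absolute value absorbs, while the supremum over $(y_{1},y_{2})$ is itself symmetric. For the triangle inequality (4), I would write $g(y_{j},x_{2}+x_{3})=g(y_{j},x_{2})+g(y_{j},x_{3})$ and use row-multilinearity of the determinant to split it as $D_{2}+D_{3}$; then $|D_{2}+D_{3}|\le|D_{2}|+|D_{3}|$ together with $\sup(|D_{2}|+|D_{3}|)\le\sup|D_{2}|+\sup|D_{3}|$ delivers $\|x_{1},x_{2}+x_{3}\|_{g}\le\|x_{1},x_{2}\|_{g}+\|x_{1},x_{3}\|_{g}$.

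The substantive axiom is (1). The ``if'' direction is immediate: if $x_{1},x_{2}$ are linearly dependent, then for every choice of $y_{1},y_{2}$ the two rows $\big(g(y_{1},x_{i}),g(y_{2},x_{i})\big)$ are proportional (or one row vanishes), so the determinant is identically $0$ and the supremum is $0$. The main obstacle, and the crux of the proposition, is the converse: assuming $x_{1},x_{2}$ linearly independent, I must exhibit an admissible pair $y_{1},y_{2}$ (each of norm $\le 1$) for which the determinant is nonzero, thereby forcing $\|x_{1},x_{2}\|_{g}>0$.

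Here I would run a Gram--Schmidt-type construction adapted to $g$. Since $x_{1}\neq 0$, put $y_{1}:=x_{1}/\|x_{1}\|$, so that $g(y_{1},x_{1})=\|x_{1}\|$ and $g(y_{1},x_{2})=g(x_{1},x_{2})/\|x_{1}\|$ by properties (1) and (2). Introduce the $g$-orthogonal part $w:=\|x_{1}\|^{2}x_{2}-g(x_{1},x_{2})\,x_{1}$; because the coefficient of $x_{2}$ is $\|x_{1}\|^{2}\neq 0$ and $x_{1},x_{2}$ are independent, $w\neq 0$, so I may set $y_{2}:=w/\|w\|$. Using linearity of $g(y_{2},\cdot)$ in the second argument, the determinant collapses to $\tfrac{1}{\|x_{1}\|}\,g(y_{2},w)=\tfrac{1}{\|x_{1}\|}\cdot\tfrac{g(w,w)}{\|w\|}=\tfrac{\|w\|}{\|x_{1}\|}>0$. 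As this strictly positive value is attained by an admissible pair, the supremum is positive, completing (1). I expect the only points needing genuine care are the verification that $w\neq 0$ and the discipline of applying linearity of $g$ only in its second slot, where it is available, and never in the first.
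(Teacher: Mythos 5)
Your verification of axioms (2)--(4) matches the paper's: both arguments reduce to row operations on the $2\times 2$ determinant (sign change under a row swap, scaling of a row, additivity in a row) combined with subadditivity of the supremum, so there is nothing to separate them there. The genuine difference is in axiom (1). The paper's proof of the converse direction merely asserts that if the rows $\bigl(g(y_{1},x_{i}),g(y_{2},x_{i})\bigr)$, $i=1,2$, are linearly dependent for every admissible pair $(y_{1},y_{2})$, then $x_{1}$ and $x_{2}$ must themselves be linearly dependent; no justification is given for this implication. You instead prove the contrapositive constructively: with $y_{1}=x_{1}/\|x_{1}\|$ and $y_{2}=w/\|w\|$, where $w=\|x_{1}\|^{2}x_{2}-g(x_{1},x_{2})x_{1}$ is nonzero by independence, the determinant factors as $\tfrac{1}{\|x_{1}\|}g\bigl(y_{2},\|x_{1}\|^{2}x_{2}-g(x_{1},x_{2})x_{1}\bigr)=\|w\|/\|x_{1}\|>0$, and the computation legitimately uses only linearity of $g$ in its second slot together with the scalar homogeneity $g(ay,x)=a\,g(y,x)$ in the first. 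This exhibits an explicit admissible pair at which the supremand is strictly positive, which is precisely the step the paper leaves unargued; it is in effect the left $g$-orthogonalization that the paper only introduces later, in Section 2.2, for other purposes. Your proof is therefore correct and, on the one substantive point of the proposition, more complete than the published one.
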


\begin{proof}
We need to check that $\|\cdot,\cdot\|_{g}$ satisfies the four properties of a 2-norm.	
\begin{enumerate}
	\item Let $x_{1}=kx_{2}$ with $k\in\mathbb{R}$. Observe that
	\begin{equation*}
	\left\vert
	\begin{array}{cc}
	g(y_{1},x_{1}) & g(y_{2},x_{1}) \\
	g(y_{1},x_{2}) & g(y_{2},x_{2})%
	\end{array}%
	\right\vert =0.
	\end{equation*}
	Hence, $\left\Vert x_{1},x_{2}\right\Vert _{g}=0$. 	Conversely, if
$\left\Vert x_{1},x_{2}\right\Vert _{g} =0$ then  the rows of the matrix
	\begin{equation*}
	\left[
	\begin{array}{cc}
	g(y_{1},x_{1}) & g(y_{2},x_{1}) \\
	g(y_{1},x_{2}) & g(y_{2},x_{2})%
	\end{array}%
	\right]
	\end{equation*}%
	are linearly dependent for all $y_{1},y_{2}$ with $\|y_{1}\|$, $\|y_{2}\|\le 1$.
    This happens only if $x_{1}$ dan $x_{2}$ are linearly dependent.
	\item By using the properties of determinants, we obtain
$\left\Vert x_{1},x_{2}\right\Vert _{g}=\left\Vert x_{2},x_{1}\right\Vert _{g}$.
	\item  Let $\alpha\in \mathbb{R}$. Using the properties of determinants and supremum, we obtain
		$\left\Vert \alpha x_{1},x_{2}\right\Vert _{g} =|\alpha| \left\Vert x_{1},x_{2}\right\Vert _{g}$.
	
	\item Observe that for arbitrary $x_{1},x_{2},z\in X$, we obtain
	\begin{eqnarray*}
		\left\Vert x_{1},x_{2}+z\right\Vert _{g} &=&\sup_{y_{j}\in X,\left\Vert
			y_{j}\right\Vert \leq 1}\left\vert
		\begin{array}{cc}
			g(y_{1},x_{1}) & g(y_{2},x_{1}) \\
			g(y_{1},x_{2}+z) & g(y_{2},x_{2}+z)%
		\end{array}%
		\right\vert  \\
		&=&\sup_{y_{j}\in X,\left\Vert y_{j}\right\Vert \leq 1}\left\{ \left\vert
		\begin{array}{cc}
			g(y_{1},x_{1}) & g(y_{2},x_{1}) \\
			g(y_{1},x_{2}) & g(y_{2},x_{2})%
		\end{array}%
		\right\vert +\left\vert
		\begin{array}{cc}(
			g(y_{1},x_{1}) & g(y_{2},x_{1}) \\
			g(y_{1},z) & g(y_{2},z)%
		\end{array}%
		\right\vert \right\}  \\
		&\leq &\sup_{y_{j}\in X,\left\Vert y_{j}\right\Vert \leq 1}\left\vert
		\begin{array}{cc}
			g(y_{1},x_{1}) & g(y_{2},x_{1}) \\
			g(y_{1},x_{2}) & g(y_{2},x_{2})%
		\end{array}%
		\right\vert +\sup_{y_{j}\in X,\left\Vert y_{j}\right\Vert \leq 1}\left\vert
		\begin{array}{cc}
			g(y_{1},x_{1}) & g(y_{2},x_{1}) \\
			g(y_{1},z) & g(y_{2},z)%
		\end{array}%
		\right\vert  \\
		&=&\left\Vert x_{1},x_{2}\right\Vert _{g}+\left\Vert x_{1},z\right\Vert _{g}
	\end{eqnarray*}
\end{enumerate}
This completes the proof.
\end{proof}

For an inner product space, we have the following fact.

\bigskip

\begin{fact}
	Let $(X,\left\langle\cdot,\cdot\right\rangle)$ be a real inner product space.
The two formulas $\|\cdot,\cdot\|_{g}$ in (\ref{2norm}) and $\|\cdot,\cdot\|_{s}$  with
	\begin{equation*}
	\|x_{1},x_{2}\|_{s} :=\left|
	\begin{array}{cc}
	\left\langle x_{1},x_{1}\right\rangle & \left\langle x_{1},x_{2}\right\rangle \\
	\left\langle x_{2},x_{1}\right\rangle & \left\langle x_{2},x_{2}\right\rangle%
	\end{array}%
	\right|^{\frac{1}{2}}
	\end{equation*}
for every $x_{1},x_{2}\in X$, are identical.
\end{fact}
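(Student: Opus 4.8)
The plan is to exploit the fact that on an inner product space the semi-inner product $g$ coincides with $\langle\cdot,\cdot\rangle$, so that the entries $g(y_j,x_i)$ appearing in (\ref{2norm}) become the inner products $\langle y_j,x_i\rangle$. The strategy is then to evaluate the supremum explicitly and to show that it equals $(\det G)^{1/2}$, where $G=[\langle x_i,x_j\rangle]$ is the Gram matrix of $x_1,x_2$; note that $\|x_1,x_2\|_s=(\det G)^{1/2}$ by the definition given in the statement.

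First I would reduce the problem to the plane $U:=\mathrm{span}\{x_1,x_2\}$. Writing $P$ for the orthogonal projection onto $U$, the identity $\langle y_j,x_i\rangle=\langle Py_j,x_i\rangle$ holds because $x_i\in U$, while $\|Py_j\|\le\|y_j\|\le 1$. Hence the determinant in (\ref{2norm}) is unchanged when each $y_j$ is replaced by its projection, and the supremum may be taken over $y_1,y_2\in U$ with $\|y_j\|\le 1$. Next I would fix an orthonormal basis $\{e_1,e_2\}$ of $U$ and record the coordinate matrices $A:=[\langle x_i,e_k\rangle]$ and $Y:=[\langle y_j,e_k\rangle]$. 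A short computation based on $\langle y_j,x_i\rangle=\sum_k\langle y_j,e_k\rangle\langle e_k,x_i\rangle$ shows that the matrix inside the determinant factors as $AY^{\top}$, so its determinant equals $\det A\cdot\det Y$. Since $G=AA^{\top}$, we get $(\det A)^2=\det G$, i.e. $|\det A|=\|x_1,x_2\|_s$, a fixed quantity independent of the choice of the $y_j$.

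It then remains to compute $\sup|\det Y|$ over all $2\times 2$ matrices $Y$ whose rows (the coordinate vectors of $y_1,y_2$ in $U$) have Euclidean norm at most $1$. Here I would invoke Hadamard's inequality, $|\det Y|\le\|y_1\|\,\|y_2\|\le 1$, for the upper bound, and exhibit $y_j=e_j$ (so that $Y$ is the identity) to show the bound is attained; thus the supremum is exactly $1$. Combining the pieces yields $\|x_1,x_2\|_g=|\det A|\cdot 1=\|x_1,x_2\|_s$. The degenerate case in which $x_1,x_2$ are linearly dependent is immediate, since both quantities vanish there (by Proposition \ref{norma2} and $\det G=0$). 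I expect the main obstacle to be the justification of the reduction to $U$ together with the verification that the extremal configuration actually lies in the admissible set—that is, confirming that the Hadamard bound is \emph{sharp} under the unit-norm constraint rather than merely an inequality. The factorization $M=AY^{\top}$ is the device that decouples the fixed factor $|\det A|$ from the variable factor $|\det Y|$ and makes this transparent.
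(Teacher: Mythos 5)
Your proof is correct, but it takes a genuinely different route from the paper's. The paper proves the two inequalities separately: for the upper bound it cites the generalized Cauchy--Schwarz inequality of Kurepa, $\bigl|\det[\langle y_j,x_i\rangle]\bigr|\le\|x_1,x_2\|_s\,\|y_1,y_2\|_s$, followed by Hadamard's inequality $\|y_1,y_2\|_s\le\|y_1\|\,\|y_2\|\le 1$; for the lower bound it runs Gram--Schmidt on $x_1,x_2$ and substitutes the resulting normalized orthogonal vectors as $y_1,y_2$, computing the determinant explicitly. Your factorization $M=AY^{\top}$ in an orthonormal basis of $U=\mathrm{span}\{x_1,x_2\}$ (after projecting the $y_j$ into $U$, a reduction the paper never needs because it does not try to locate the extremizers among all of $X$) accomplishes both bounds at once: $\det M=\det A\cdot\det Y$ with $|\det A|=\|x_1,x_2\|_s$ fixed, and $\sup|\det Y|=1$ by Hadamard together with the admissible choice $Y=I$. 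This is more self-contained --- it derives the Kurepa-type inequality in this two-dimensional setting rather than quoting it --- and it makes transparent exactly where the supremum is attained; your extremal choice $y_j=e_j$ is, up to normalization and relabeling, the same Gram--Schmidt basis the paper uses for its lower bound. The only cost is that the coordinate argument is tied to the inner product structure, which is all that is required here. One cosmetic remark: the supremum in (\ref{2norm}) is taken of the determinant itself rather than of its absolute value, but since interchanging $y_1$ and $y_2$ reverses the sign, working with $\sup|\det(\cdot)|$ as you do changes nothing.
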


\begin{proof}
On the inner product space $X$, the semi-inner product $g(\cdot,\cdot)$ is identical with the inner product
$\left\langle\cdot,\cdot\right\rangle$.  Therefore,
	\begin{equation*}
\left\Vert x_{1},x_{2}\right\Vert _{g}=\sup_{y_{j}\in X,\left\Vert
	y_{j}\right\Vert \leq 1}\left\vert
\begin{array}{cc}
	\left\langle y_{1},x_{1}\right\rangle & \left\langle y_{1},x_{2}\right\rangle \\
	\left\langle y_{2},x_{1}\right\rangle & \left\langle y_{2},x_{2}\right\rangle%
\end{array}%
\right\vert. 	
\end{equation*}
By applying the generalized Cauchy-Schwarz inequality \cite{Kurepa} and Hadamard's
inequality \cite{Gantmacher}, we obtain
\begin{align*}
\left\Vert x_{1},x_{2}\right\Vert _{g} \le \sup_{y_{j}\in X,\left\Vert
	y_{j}\right\Vert \leq 1} \|x_{1},x_{2}\|_{s}\|y_{1},y_{2}\|_{s}\le \|x_{1},x_{2}\|_{s}.
\end{align*}	
Conversely, assuming that $\{x_{1},x_{2}\}$ are linearly independent.
By using the Gram-Schmidt process, we have $\{x'_{1},x'_{2}\}$ are orthogonal. Moreover,
$\|x_{1},x_{2}\|_{s}=\|x'_{1},x'_{2}\|_{s}=\|x'_{1}\|\|x'_{2}\|$.
If $y_{1}=\frac{x'_{1}}{\|x'_{1}\|}$ and $y_{2}=\frac{x'_{2}}{\|x'_{2}\|}$,
then $\|y_{1}\|=1$ and $\|y_{2}\|=1$.
Next, using the properties of inner product and determinants, we obtain
\begin{eqnarray*}
	\left\vert
	\begin{array}{cc}
		\left\langle y_{1},x_{1}\right\rangle  & \left\langle
		y_{1},x_{2}\right\rangle  \\
		\left\langle y_{2},x_{1}\right\rangle  & \left\langle
		y_{2},x_{2}\right\rangle
	\end{array}%
	\right\vert  &=&\left\vert
	\begin{array}{cc}
		\left\langle y_{1},x_{1}^{\prime }\right\rangle  & \left\langle
		y_{1},x_{2}^{\prime }\right\rangle  \\
		\left\langle y_{2},x_{1}^{\prime }\right\rangle  & \left\langle
		y_{2},x_{2}^{\prime }\right\rangle
	\end{array}%
	\right\vert =\frac{1}{\left\Vert x_{1}^{\prime }\right\Vert \left\Vert
		x_{2}^{\prime }\right\Vert }\left\vert
	\begin{array}{cc}
		\left\langle x_{1}^{\prime },x_{1}^{\prime }\right\rangle  & \left\langle
		x_{1}^{\prime },x_{2}^{\prime }\right\rangle  \\
		\left\langle x_{2}^{\prime },x_{1}^{\prime }\right\rangle  & \left\langle
		x_{2}^{\prime },x_{2}^{\prime }\right\rangle
	\end{array}%
	\right\vert  \\
	&=&\left\Vert x_{1}^{\prime }\right\Vert \left\Vert x_{2}^{\prime
	}\right\Vert=\|x_{1},x_{2}\|_{s}.
\end{eqnarray*}
Thus, $\|x_{1},x_{2}\|_{g}\ge \|x_{1},x_{2}\|_{s}$, so that $\|x_{1},x_{2}\|_{g}= \|x_{1},x_{2}\|_{s}$.
Next, if $\{x_{1},x_{2}\}$ are linearly dependent, then $\|x_{1},x_{2}\|_{g}= \|x_{1},x_{2}\|_{s}=0$.
\end{proof}

Note that in an inner product space, we have a better inequality for Fact \ref{fact}, namely
$\|x_{1},x_{2}\|_{g}\le \|x_{1}\|\|x_{2}\|$. This is Hadamard's inequality for $n=2$ \cite{Gantmacher}.

\subsection{The $g$-angle between $2$-dimensional subspaces}
Here, using the $2$-norm $\|\cdot,\cdot\|_{g}$, we will formulate the $g$-angle between $2$-dimensional
subspaces of $X$. First, we recall the definition of the $g$-orthogonal projection
of $u$ on a subspace of $X$ as follows.

\bigskip

\begin{definition}\label{fact:2.3}
\cite{Milicic0} Let $u$ be a vector of $X$ and $S=\text{span}\{x_{1},\dots,x_{n}\}$ be
a subspace of $X$ with $\Gamma (x_{1},\dots,x_{n})=\det [g(x_{i},x_{k})]\neq 0$. (This additional
condition is added because it does not always follow from the linearly independence condition.)
The $g$-{\it orthogonal projection} of $u$ on $S$, denoted by $u_{S}$, is defined by
\begin{align*}
u_{S}:=-\frac{1}{\Gamma (x_{1},\dots, x_{n})}\left\vert
\begin{array}{cccc}
0 & x_{1} & \cdots  & x_{n} \\
g(x_{1},u) & g(x_{1},x_{1}) & \cdots  & g(x_{1},x_{n}) \\
\vdots  & \vdots  & \ddots  & \vdots  \\
g(x_{n},u) & g(x_{n},x_{1}) & \cdots  & g(x_{n},x_{n})%
\end{array}%
\right\vert,
\end{align*}
and its $g$-{\it orthogonal complement} $u-u_{S}$ is given by
\begin{align*}
u-u_{S}=\frac{1}{\Gamma (x_{1},\dots, x_{n})}\left\vert
\begin{array}{cccc}
u & x_{1} & \cdots  & x_{n} \\
g(x_{1},u) & g(x_{1},x_{1}) & \cdots  & g(x_{1},x_{n}) \\
\vdots  & \vdots  & \ddots  & \vdots  \\
g(x_{n},u) & g(x_{n},x_{1}) & \cdots  & g(x_{n},x_{n})%
\end{array}%
\right\vert.
\end{align*}
\end{definition}

Note that the notation of the determinant $|\cdot|$ here has a special meaning
because the elements of the matrix are not in the same field.

Let $U$ and $V$ be subspaces of $X$. Take arbitrary $u_{1},u_{2}\in U$ and
$\alpha,\beta\in\mathbb{R}$. Using properties of determinants and semi-inner product-$g$,
we have $(\alpha u_{1}+\beta u_{2})_{V}=\alpha u_{1V}+\beta u_{2V}$. Hence,
the $g$-orthogonal projection of $U$ on $V$ is a linear transformation from $U$ to $V$.

Next, let $x_{1},\dots,x_{n}\in X$ be a finite sequence of linearly independent
vectors. We can construct a {\it left $g$-orthonormal sequence} $x_{1}^{\ast
},\dots,x_{n}^{\ast }$ with $x_{1}^{\ast }:=\frac{x_{1}}{\|x_{1}\|}$ and
\begin{equation}
x_{k}^{\ast }:=\frac{x_{k}-\left(x_{k}\right)_{S_{k-1}}}
{\|x_{k}-\left(x_{k}\right)_{S_{k-1}}\|},
\end{equation}%
where $S_{k-1}=\text{span}\left\{x_{1}^{\ast },\dots,x_{k-1}^{\ast }\right\} $.
Note that ${\rm span}\{x_1^*,\dots,x_{k-1}^*\}={\rm span}\{x_1,\dots,x_{k-1}\}$
for each $k=2,\dots,n$, and that
$x_{k}^{\ast}\perp _{g}x_{l}^{\ast}$ for $k,l=1,\dots,n$ with $k<l$
(see \cite{Gunawan5,Milicic0}). We also observe that $\Gamma(x_1^*,\dots,x_{k-1}^*)=1$
for each $k=2,\dots,n$.

From the properties of the 2-norm and the $g$-orthogonal projection, we have the following lemma.

\bigskip

\begin{lemma}\label{Lemma1}
	If $U=\text{span}\{u_{1},u_{2}\}$ and
$V=\text{span}\{v_{1},v_{2}\}$ are $2$-dimensional subspaces of $X$ where $\{v_{1},v_{2}\}$
is left $g$-orthonormal, then
		\begin{equation*}
		\left\vert
		\begin{array}{cc}
		g(y_{1},u_{1V}) & g(y_{2},u_{1V}) \\
		g(y_{1},u_{2V}) & g(y_{2},u_{2V})%
		\end{array}%
		\right\vert =\left\vert
		\begin{array}{cc}
		g(v_{1},u_{1}) & g(v_{1},u_{2}) \\
		g(v_{2},u_{1}) & g(v_{2},u_{2})%
		\end{array}%
		\right\vert \left\vert
		\begin{array}{cc}
		g(y_{1},v_{1}) & g(y_{2},v_{1}) \\
		g(y_{1},v_{2}) & g(y_{2},v_{2})%
		\end{array}%
		\right\vert
		\end{equation*}
for every $y_{1},y_{2}\in X$.
\end{lemma}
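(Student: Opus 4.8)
The plan is to compute the two projections $u_{1V}$ and $u_{2V}$ explicitly from Definition~\ref{fact:2.3}, and then exploit the linearity of $g$ in its second slot together with the multiplicativity of the determinant. First I would record what left $g$-orthonormality of $\{v_1,v_2\}$ buys us: $g(v_1,v_1)=g(v_2,v_2)=1$, $g(v_1,v_2)=0$ (since $1<2$ gives $v_1\perp_g v_2$), and $\Gamma(v_1,v_2)=1$, the last of which guarantees the projections are well defined. It is important to keep in mind that $g$ need not be symmetric, so $g(v_2,v_1)$ may be nonzero; I will abbreviate $c:=g(v_2,v_1)$.

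Expanding the (formal) determinant of Definition~\ref{fact:2.3} with $n=2$, $x_1=v_1$, $x_2=v_2$ and inserting the values above, I would obtain
$$u_{iV}=g(v_1,u_i)\,v_1+\bigl[g(v_2,u_i)-c\,g(v_1,u_i)\bigr]v_2, \qquad i=1,2.$$
Since $g(y_j,\cdot)$ is linear, applying $g(y_j,\cdot)$ to this identity writes each entry $g(y_j,u_{iV})$ of the left-hand determinant as $\sum_k M_{ik}\,g(y_j,v_k)$, i.e.\ as the $(i,j)$ entry of a matrix product $MP$. Here $M$ is the $2\times 2$ matrix of projection coefficients, whose $i$-th row is $\bigl(g(v_1,u_i),\ g(v_2,u_i)-c\,g(v_1,u_i)\bigr)$, and $P$ is the matrix with $(k,j)$ entry $g(y_j,v_k)$, namely the matrix whose determinant is precisely the third factor on the right-hand side.

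By multiplicativity of the determinant the left-hand side equals $\det M\cdot\det P$, so it only remains to identify $\det M$ with the first factor. Factoring out the elementary column operation, one has
$$M=\begin{pmatrix} g(v_1,u_1) & g(v_2,u_1)\\ g(v_1,u_2) & g(v_2,u_2)\end{pmatrix}\begin{pmatrix}1 & -c\\ 0 & 1\end{pmatrix},$$
and since the triangular factor has determinant $1$, $\det M$ equals the determinant of the first matrix. That matrix is the transpose of $[g(v_i,u_j)]$, hence has the same determinant as the first factor on the right-hand side, and the claimed factorization follows.

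The main obstacle I anticipate is purely the bookkeeping forced by the non-symmetry of $g$: the stray scalar $c=g(v_2,v_1)$ genuinely appears in the projection formula and must be shown to disappear. The clean way to see this cancellation is the elementary-column-operation factorization above (equivalently, a direct $2\times 2$ expansion shows the $c$-terms cancel in pairs), so that the final expression depends only on the Gram-type data $g(v_i,u_j)$ and not on $c$ itself.
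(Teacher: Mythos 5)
Your proof is correct, and it rests on the same two ingredients as the paper's own argument: the simplification of Definition~\ref{fact:2.3} afforded by left $g$-orthonormality (with $g(v_1,v_1)=g(v_2,v_2)=1$, $g(v_1,v_2)=0$, but $c=g(v_2,v_1)$ kept as an unknown, since $g$ need not be symmetric) and the linearity of $g$ in its second argument. The difference is in the execution. The paper leaves each entry $g(y_j,u_{iV})$ as a $3\times 3$ determinant and expands the resulting $2\times 2$ determinant of $3\times 3$ determinants into roughly sixteen monomials, checking by hand that every term containing $g(v_2,v_1)$ cancels. You instead first extract the explicit expansion $u_{iV}=g(v_1,u_i)\,v_1+\bigl[g(v_2,u_i)-c\,g(v_1,u_i)\bigr]v_2$ (which is indeed what the $3\times 3$ formal determinant gives), recognize the left-hand matrix as the product $MP$, and dispose of $c$ by factoring $M$ through a unimodular triangular matrix; the final observation that $M$'s leading factor is the transpose of $[g(v_i,u_j)]$ is needed and you supply it. This buys a substantially shorter and more transparent argument: the cancellation of $c$ is explained structurally by an elementary column operation rather than observed after a long expansion, and the matrix-product formulation would extend with no extra work to $n$-dimensional subspaces and an $n$-fold version of the $2$-norm, which the brute-force expansion would not comfortably do.
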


\begin{proof}
If $\{v_{1},v_{2}\}$ is left $g$-orthonormal, then  $\Gamma\{v_{1},v_{2}\}=1$. Consequently, 	
\begin{equation*}
g(y_{j},u_{iV})=-\left\vert
\begin{array}{ccc}
0 & g\left( y_{j},v_{1}\right)  & g\left( y_{j},v_{2}\right)  \\
g\left( v_{1},u_{i}\right)  & 1 & 0 \\
g\left( v_{2},u_{i}\right)  & g\left( v_{2},v_{1}\right)  & 1%
\end{array}%
\right\vert
\end{equation*}
for $i,j=1,2$. By using the properties of determinants, we obtain
\small{
	\begin{eqnarray*}
		&&\left\vert
		\begin{array}{cc}
			g(y_{1},u_{1V}) & g(y_{2},u_{1V}) \\
			g(y_{1},u_{2V}) & g(y_{2},u_{2V})%
		\end{array}%
		\right\vert  \\
		&=&\left\vert
		\begin{array}{ccc}
			0 & g(y_{1},v_{1})  & g(y_{1},v_{2})  \\
			g(v_{1},u_{1})  & 1 & 0 \\
			g(v_{2},u_{1})  & g(v_{2},v_{1})  & 1%
		\end{array}%
		\right\vert \left\vert
		\begin{array}{ccc}
			0 & g(y_{2},v_{1})  & g(y_{2},v_{2})  \\
			g(v_{1},u_{2})  & 1 & 0 \\
			g(v_{2},u_{2})  & g( v_{2},v_{1})  & 1%
		\end{array}%
		\right\vert  \\
		&&-\left\vert
		\begin{array}{ccc}
			0 & g(y_{2},v_{1})  & g(y_{2},v_{2})  \\
			g(v_{1},u_{1})  & 1 & 0 \\
			g(v_{2},u_{1})  & g( v_{2},v_{1})  & 1%
		\end{array}%
		\right\vert \left\vert
		\begin{array}{ccc}
			0 & g(y_{1},v_{1})  & g(y_{1},v_{2})  \\
			g(v_{1},u_{2})  & 1 & 0 \\
			g(v_{2},u_{2})  & g(v_{2},v_{1})  & 1%
		\end{array}%
		\right\vert  \\
		&=&\left(-g( y_{1},v_{1}) g(v_{1},u_{1}) +g(y_{1},v_{2}) g(v_{1},u_{1}) g(v_{2},v_{1})
		-g(y_{1},v_{2}) g(v_{2},u_{1}) \right)  \\
		&&\left( -g(y_{2},v_{1}) g(v_{1},u_{2}) +g(y_{2},v_{2}) g(v_{1},u_{2}) g(v_{2},v_{1})
		-g(y_{2},v_{2}) g(v_{2},u_{2}) \right)  \\
		&&-\left(-g(y_{2},v_{1}) g(v_{1},u_{1}) +g(y_{2},v_{2}) g(v_{1},u_{1}) g(v_{2},v_{1})
		-g(y_{2},v_{2}) g(v_{2},u_{1}) \right)  \\
		&&\left( -g(y_{1},v_{1}) g(v_{1},u_{2}) +g(y_{1},v_{2}) g(v_{1},u_{2}) g(v_{2},v_{1})
		-g(y_{1},v_{2}) g(v_{2},u_{2}) \right)  \\
		&=&g(y_{1},v_{1}) g(y_{2},v_{1}) g(	v_{1},u_{1}) g(v_{1},u_{2}) -g( y_{1},v_{1})
		g(y_{2},v_{2}) g(v_{1},u_{1}) g(
		v_{1},u_{2}) g(v_{2},v_{1}) + \\
		&&g(y_{1},v_{1}) g(y_{2},v_{2}) g(v_{1},u_{1}) g(v_{2},u_{2}) -g(y_{1},v_{2})
		g(y_{2},v_{1}) g(v_{1},u_{1}) g(v_{1},u_{2}) g(v_{2},v_{1}) + \\
		&& g(y_{1},v_{2}) g(y_{2},v_{2}) g(v_{1},u_{1}) g(v_{1},u_{2}) g(v_{2},v_{1})
		^{2}-g(y_{1},v_{2}) g(y_{2},v_{2}) g(v_{1},u_{1}) g(v_{2},u_{2})g(v_{2},v_{1})   \\
		&&+g(y_{1},v_{2}) g(y_{2},v_{1}) g(v_{1},u_{2}) g(v_{2},u_{1}) -g(y_{1},v_{2}) g(y_{2},v_{2}) g(v_{1},u_{2}) g(v_{2},u_{1}) g( v_{2},v_{1})  \\
		&&+g(y_{1},v_{2})g(y_{2},v_{2})g(v_{2},u_{1})g(v_{2},u_{2}) -g(y_{1},v_{1}) g(y_{2},v_{1}) g(v_{1},u_{1}) g(v_{1},u_{2}) \\
		&& +g(y_{1},v_{2})g(y_{2},v_{1}) g(v_{1},u_{1}) g(v_{1},u_{2}) g(v_{2},v_{1})-g( y_{1},v_{2}) g( y_{2},v_{1}) g(v_{1},u_{1}) g( v_{2},u_{2})+\\
		&& g(y_{1},v_{1})g( y_{2},v_{2}) g( v_{1},u_{1}) g(v_{1},u_{2}) g (v_{2},v_{1})-g(y_{1},v_{2}) g( y_{2},v_{2}) g(v_{1},u_{1}) g(v_{1},u_{2}) g( v_{2},v_{1})^{2} \\
		&&+g(y_{1},v_{2}) g(y_{2},v_{2}) g(v_{1},u_{1}) g(v_{2},u_{2}) g(v_{2},v_{1})-g( y_{1},v_{1}) g(y_{2},v_{2}) g(v_{2},u_{1}) g( v_{1},u_{2})   \\
		&&+g(y_{1},v_{2})g( y_{2},v_{2}) g(v_{1},u_{2}) g(v_{2},u_{1} g( v_{2},v_{1})-g( y_{1},v_{2}) g(y_{2},v_{2}) g(v_{2},u_{1}) g(v_{2},u_{2})  \\
		&=&g(y_{1},v_{1}) g(y_{2},v_{2}) g(v_{1},u_{1}) g(v_{2},u_{2} +g( y_{1},v_{2})
		g(y_{2},v_{1}) g(v_{1},u_{2}) g(v_{2},u_{1})  \\
		&&-g(y_{1},v_{2}) g(y_{2},v_{1}) g(v_{1},u_{1}) g(v_{2},u_{2}) -g( y_{1},v_{1})
		g(y_{2},v_{2}) g(v_{2},u_{1}) g(v_{1},u_{2})  \\
		&=&g(v_{1},u_{1}) g( v_{2},u_{2}) \left\vert
		\begin{array}{cc}
			g(y_{1},v_{1}) & g(y_{2},v_{1}) \\
			g(y_{1},v_{2}) & g(y_{2},v_{2})%
		\end{array}%
		\right\vert -g(v_{1},u_{2}) g(v_{2},u_{1})
		\left\vert
		\begin{array}{cc}
			g(y_{1},v_{1}) & g(y_{2},v_{1}) \\
			g(y_{1},v_{2}) & g(y_{2},v_{2})%
		\end{array}%
		\right\vert.
	\end{eqnarray*}
}
Hence,
\begin{equation*}
\left\vert
\begin{array}{cc}
g(y_{1},u_{1V}) & g(y_{2},u_{1V}) \\
g(y_{1},u_{2V}) & g(y_{2},u_{2V})%
\end{array}%
\right\vert =\left\vert
\begin{array}{cc}
g(v_{1},u_{1}) & g(v_{1},u_{2}) \\
g(v_{2},u_{1}) & g(v_{2},u_{2})%
\end{array}%
\right\vert \left\vert
\begin{array}{cc}
g(y_{1},v_{1}) & g(y_{2},v_{1}) \\
g(y_{1},v_{2}) & g(y_{2},v_{2})%
\end{array}%
\right\vert.
\end{equation*}
This proves the lemma.
\end{proof}

Let us now define the $g$-angle between 2-dimensional subspace $U =\text{span}\{u_{1},u_{2}\}$
and 2-dimensional subspace $V$ of $X$ by
\begin{equation}
\cos ^{2}A_{g}(U,V):=\frac{\Vert u_{1V},u_{2V}\Vert _{g}^{2}}{\Vert
	u_{1},u_{2}\Vert _{g}^{2} \ \, \underset{{\rm span}\{v_1,v_2\}=V}{\sup }\Vert v_{1}^{\ast
	},v_{2}^{\ast }\Vert _{g}^{2}}   \label{4}
\end{equation}
where $u_{iV}$'s denote the the $g$-orthogonal projection of $u_{i}$'s on $V$
with $i=1,2$, and $\{v_{1}^{\ast },v_{2}^{\ast }\}$ is the left $g$-orthonormal
set obtained from $\{v_1,v_2\}$.

\bigskip

\begin{remark}
On an inner product space, the definition of the $g$-angle in (\ref{4}) is
identical with the angle defined in \cite{Gunawan4}, namely
\begin{equation*}
\cos ^{2}A_{g}(U,V)=\frac{\|u_{1V},u_{2V}\|_{g}^{2}}{%
	\|u_{1},u_{2}\|_{g}^{2}}
\end{equation*}.
\end{remark}

\bigskip

\begin{remark}
Let $\left\{ v_{1},v_{2}\right\} $ be a linearly independent set that spans $V$.
If $v_{1}^{\prime}=v_{1}$ and $v_{2}^{\prime }=v_{2}-\frac{g(v_{1},v_{2})}{\left\Vert
 	v_{1}\right\Vert }v_{1}$, then $\left\{ v_{1}^{^{\prime }},v_{2}^{\prime
 }\right\}$ is left $g$-orthogonal. Likewise, if $w_{1}^{\prime }=v_{2}$
 and $w_{2}^{\prime }=v_{1}-\frac{g(v_{2},v_{1})}{\left\Vert v_{2}\right\Vert
 }v_{2}$, then $\left\{ w_{1}^{\prime },w_{2}^{\prime }\right\} $
 is also left $g$-orthogonal. Using properties of the $2$-norm,
 we obtain $\|v_{1},v_{2}\|_{g}=\Vert v_{1}^{\prime },v_{2}^{\prime }\Vert
 _{g}=\Vert w_{1}^{\prime },w_{2}^{\prime
 }\Vert _{g}$. But, in general,
 $\Vert v_{1}^{\ast },v_{2}^{\ast }\Vert _{g}\neq \Vert w_{1}^{\ast },w_{2}^{\ast }\Vert _{g}$,
 where $v_{i}^{\ast }=\frac{v_{i}^{\prime }}{\left\Vert v_{i}^{\prime }\right\Vert }$ and
 $w_{i}^{\ast }=\frac{w_{i}^{\prime }}{\left\Vert w_{i}^{\prime }\right\Vert }$ with $i=1,2.$
 For instance, take $v_{1}=(1,0,0,\dots)$ and $v_{2}=(1,1,0,\dots)$ in $l^{1}$ with the usual
 semi-inner product $g$. If $v_{1}^{^{\prime }}=v_{1}$ and $v_{2}^{\prime
 }=v_{2}-\frac{g(v_{1},v_{2})}{\left\Vert v_{1}\right\Vert }v_{1}=(0,1,0,\dots)$,
 then $\left\Vert v_{1}^{\prime }\right\Vert \left\Vert v_{2}^{\prime
 }\right\Vert =1$. Next, if  $w_{1}^{^{\prime }}=v_{2}$ and  $%
 w_{2}^{\prime }=v_{1}-\frac{g(v_{2},v_{1})}{\left\Vert v_{2}\right\Vert }%
 v_{2}=(\frac{1}{2},-\frac{1}{2},0,\ldots )$, then  $\left\Vert
 w_{1}^{\prime }\right\Vert \left\Vert w_{2}^{\prime }\right\Vert =2.$ Hence
 $\Vert v_{1}^{\ast },v_{2}^{\ast }\Vert _{g}\neq $ $\Vert w_{1}^{\ast
 },w_{2}^{\ast }\Vert _{g}.$ Here we only change the order of the basis for $V$.
 Consider if we change the basis with another. This explains why we have the supremum term in the formula.
\end{remark}

According to the following theorem, the definition of the $g$-angle in (\ref{4}) makes sense.

\bigskip

\begin{theorem}\label{theosudut}
The ratio on the right hand side of (\ref{4}) is a number in $[0, 1]$ and
is independent of the choice of basis for $U$ and $V$ .
\end{theorem}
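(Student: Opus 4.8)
The plan is to treat the two assertions — membership in $[0,1]$ and basis-independence — separately, letting Lemma~\ref{Lemma1} do the heavy lifting for the upper bound. First I would record that the ratio is well-defined, i.e.\ the denominator is strictly positive and finite. For any left $g$-orthonormal pair $\{v_1^\ast,v_2^\ast\}$ we have $\|v_1^\ast\|=\|v_2^\ast\|=1$, so Fact~\ref{fact} gives $\|v_1^\ast,v_2^\ast\|_g\le 2$ and the supremum is finite; since $\{v_1^\ast,v_2^\ast\}$ is linearly independent, property~(1) of the $2$-norm gives $\|v_1^\ast,v_2^\ast\|_g>0$, so the supremum is positive. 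Together with $\|u_1,u_2\|_g>0$ this makes the denominator a positive real number, and the ratio is clearly non-negative.

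For the bound $\le 1$, I would fix one left $g$-orthonormal basis $\{v_1,v_2\}$ of $V$ and apply Lemma~\ref{Lemma1}. Writing $D:=\det[g(v_i,u_j)]$ for the (constant) first determinant on the right-hand side, the lemma states that for every $y_1,y_2$ the determinant in $g(y_j,u_{iV})$ equals $D$ times the determinant in $g(y_j,v_i)$. Taking $\sup_{\|y_j\|\le 1}$ of the absolute value and pulling the constant $|D|$ out of the supremum yields the exact identity $\|u_{1V},u_{2V}\|_g=|D|\,\|v_1,v_2\|_g$. It then remains to bound the two factors: choosing $y_1=v_1,\ y_2=v_2$ (both of norm $1$) in the supremum defining $\|u_1,u_2\|_g$ exhibits $|D|$ as one of the admissible competitors, so $|D|\le\|u_1,u_2\|_g$; and $\|v_1,v_2\|_g\le\sup_{\operatorname{span}\{w_1,w_2\}=V}\|w_1^\ast,w_2^\ast\|_g$ since $\{v_1,v_2\}$ is itself a left $g$-orthonormal basis of $V$. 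Squaring and combining gives $\|u_{1V},u_{2V}\|_g^2\le\|u_1,u_2\|_g^2\cdot\sup\|w_1^\ast,w_2^\ast\|_g^2$, which is exactly $\cos^2 A_g(U,V)\le 1$.

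Next I would establish independence of the basis for $U$. If $\{u_1',u_2'\}$ is another basis with change-of-basis matrix $C=(c_{ij})$, then linearity of $g$ in its second slot shows that $[g(y_j,u_i')]$ equals $C$ times $[g(y_j,u_i)]$; taking determinants and then $\sup_{\|y_j\|\le 1}$ gives $\|u_1',u_2'\|_g=|\det C|\,\|u_1,u_2\|_g$. Because the $g$-orthogonal projection onto $V$ is linear, $u_{iV}'=\sum_j c_{ij}u_{jV}$, so the identical computation gives $\|u_{1V}',u_{2V}'\|_g=|\det C|\,\|u_{1V},u_{2V}\|_g$. The factors $|\det C|^2$ cancel in the ratio, and since the supremum term involves only $V$, the whole expression is unchanged.

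Finally, independence of the basis for $V$ is almost immediate: the supremum in the denominator already ranges over all bases of $V$, so it privileges none of them, while the projections $u_{iV}$ are intrinsic to the $g$-orthogonal projection of Definition~\ref{fact:2.3} and hence do not depend on the chosen basis of $V$; thus neither numerator nor denominator changes. I expect the main obstacle to be purely bookkeeping: justifying that the constant $D$ may be extracted from the supremum in Lemma~\ref{Lemma1} (legitimate precisely because $D$ is independent of $y_1,y_2$) and matching the orientation of the determinants so that the competitor $y_1=v_1,\ y_2=v_2$ really realizes the value $D$ inside the supremum defining $\|u_1,u_2\|_g$.
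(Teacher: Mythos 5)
Your proposal is correct and follows essentially the same route as the paper: both factor $\|u_{1V},u_{2V}\|_g$ via Lemma~\ref{Lemma1} with a left $g$-orthonormal basis of $V$, bound the determinant factor by $\|u_1,u_2\|_g$ using $y_j=v_j^\ast$ as admissible competitors, absorb the remaining factor into the supremum over bases of $V$, and derive basis-independence for $U$ from the linearity of the $g$-orthogonal projection (you phrase this with a general change-of-basis matrix, the paper with elementary basis operations, but the content is the same). Your added remarks on the well-definedness of the denominator and the orientation of the determinant are harmless refinements that the paper leaves implicit.
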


\begin{proof}
Let $\{v_{1},v_{2}\}$ be a linearly independent set that spans $V$.
Using the process in (2), we obtain the left
$g$-orthonormal set $\{v_{1}^{\ast },v_{2}^{\ast }\}$. Notice that
$\text{span}\{ v_{1},v_{2}\}=\text{span}\{v_{1}^{\ast },v_{2}^{\ast }\}$.
Using Lemma \ref{Lemma1} and Definition $\|\cdot,\cdot\|_{g}$ in (\ref{2norm}), we have
\begin{align*}
	\left\Vert u_{1V},u_{2V}\right\Vert _{g} =\left\vert
	\begin{array}{cc}
		g(v_{1}^{\ast },u_{1}) & g(v_{1}^{\ast },u_{2}) \\
		g(v_{2}^{\ast },u_{1}) & g(v_{2}^{\ast },u_{2})%
	\end{array}%
	\right\vert \left\Vert v_{1}^{\ast },v_{2}^{\ast }\right\Vert _{g}
	\end{align*}%
Since $\left\Vert v_{i}^{\ast }\right\Vert =1$ for $i=1,2$, we have%
\begin{align*}
	\left\Vert u_{1V},u_{2V}\right\Vert _{g} &=\left\vert
		\begin{array}{cc}
			g(v_{1}^{\ast },u_{1}) & g(v_{1}^{\ast },u_{2}) \\
			g(v_{2}^{\ast },u_{1}) & g(v_{2}^{\ast },u_{2})%
		\end{array}%
		\right\vert \left\Vert v_{1}^{\ast },v_{2}^{\ast }\right\Vert _{g} \\
	&\leq \sup_{y_{j}\in X,\left\Vert y_{j}\right\Vert \leq 1}\left\vert
		\begin{array}{cc}
			g(y_{1},u_{1}) & g(y_{2},u_{1}) \\
			g(y_{1},u_{2}) & g(y_{2},u_{2})%
		\end{array}%
		\right\vert \left\Vert v_{1}^{\ast },v_{2}^{\ast }\right\Vert _{g} \\
	&=\left\Vert u_{1},u_{2}\right\Vert _{g}\left\Vert v_{1}^{\ast
		},v_{2}^{\ast }\right\Vert _{g}  \\
	&\le  \left\Vert u_{1},u_{2}\right\Vert
	_{g}\underset{{\rm span}\{w_1,w_2\}=V}{\sup }\left\Vert w_{1}^{\ast },w_{2}^{\ast
	}\right\Vert_{g}.
\end{align*}
so that $$\frac{\left\Vert u_{1V},u_{2V}\right\Vert _{g}}{\left\Vert
	u_{1},u_{2}\right\Vert _{g}\underset{{\rm span}\{w_1,w_2\}=V}{\sup }\left\Vert w_{1}^{\ast
	},w_{2}^{\ast }\right\Vert_{g}}\le 1.$$

\noindent Secondly, note that the $g$-orthogonal projection of $u_{i}$'s on $V$ is independent
of the choice of basis for $V$ \cite{Milicic0}. Moreover, since the $g$-orthogonal projection of 
$U$ on $V$ is a linear transformation from $U$ to $V$, the ratio of (\ref{4}) is also invariant under every change of basis
for $U$. Indeed, the ratio is unchanged if swap $u_{1}$ and $u_{2}$, replace $u_{1}$ with
$u_{1}+\alpha u_{2}$, replace $u_{1}$ with $\alpha u_{1}$ or  $u_{2}$ with $\alpha u_{2}$ where $\alpha\neq 0$. The proof is complete.
\end{proof}

\section{Concluding Remarks}

The formula (\ref{4}) can be used to compute the $g$-angle between two subspaces of $\ell^p$ as follows.
Let $\{ v_{1},\dots,v_{m}\} $ be a linearly independent set that spans $V$ in $\ell^p$.
Using the process in (2), we obtain the left $g$-orthonormal set $\{v_{1}^{\ast },\dots,v_{m}^{\ast }\}$.
Here $\text{span}~\{v_1,\ldots,v_m\}=\text{span}~\{v_{1}^{\ast },\dots,v_{m}^{\ast }\}$.
Hence, for $i=1,2$,
$$
	u_{iV} =\,-\left\vert
	\begin{array}{cccc}
		0 & v_{1}^{\ast } & \cdots  & v_{m}^{\ast } \\
		g(v_{1}^{\ast },u_{i})  & g(v_{1}^{\ast },v_{1}^{\ast
		})  & \cdots  & g(v_{1}^{\ast },v_{m}^{\ast })  \\
		\vdots  & \vdots  & \ddots  & \vdots  \\
		g(v_{m}^{\ast },u_{i})  & g(v_{m}^{\ast },v_{1}^{\ast
		})  & \cdots  & g(v_{m}^{\ast },v_{m}^{\ast })
	\end{array}
	\right\vert
	=\,-\left\vert
	\begin{array}{cccc}
		g(v_{1}^{\ast },v_{1}^{\ast })  & \cdots  & g(v_{m}^{\ast
		},v_{1}^{\ast })  & v_{1}^{\ast } \\
		\vdots  & \ddots  & \vdots  & \vdots  \\
		g(v_{1}^{\ast },v_{m}^{\ast })  & \cdots  & g(v_{m}^{\ast
		},v_{m}^{\ast })  & v_{m}^{\ast } \\
		g(v_{1}^{\ast },u_{i})  & \cdots  & g(v_{m}^{\ast
		},u_{i})  & 0%
	\end{array}
	\right\vert.
$$
Substituting $g(v_{k}^{\ast },v_{i}^{\ast }) =\| v_{k}^{\ast
}\|_{p}^{2-p}\sum\limits_{j_{k}}|v_{kj_{k}}^{\ast}|
^{p-1}\text{sgn}(v_{kj_{k}}^{\ast })v_{ij_{k}}^{\ast }$, we obtain
\begin{align*}
u_{iV}=-\sum\limits_{j_{m}}\cdots \sum\limits_{j_{1}}|v_{1j_{m}}^{\ast
}|^{p-1}sgn(v_{1j_{m}}^{\ast })\cdots |v_{mj_{1}}^{\ast
}|^{p-1}sgn(v_{mj_{1}}^{\ast })\left\vert
\begin{array}{cccc}
v_{1j_{1}}^{\ast } & \cdots  & v_{1j_{m}}^{\ast } & v_{1}^{\ast } \\
\vdots  & \ddots  & \vdots  & \vdots  \\
v_{mj_{1}}^{\ast } & \cdots  & v_{mj_{m}}^{\ast } & v_{m}^{\ast } \\
u_{ij_{1}} & \cdots  & u_{ij_{m}} & 0%
\end{array}%
\right\vert.
\end{align*}
\bigskip
Using this formula, we can compute the value of the $g$-angle between two subspaces $U =\text{span}\{u_{1},u_{2}\}$
and $V =\text{span}\{v_{1},v_{2}\}$ of $\ell^{p}$ for $1\le p<\infty$. For instance, in $\ell
^{2}$, let $U=\text{span}\{u_{1},u_{2}\}$ and $V=\text{span}\{v_{1},v_{2}\}$
with $u_{1}=(1,1,2,0,\dots )$, $u_{2}=(2,1,3,0,\dots )$, 
$v_{1}=(1,0,0,0,\dots )$, and $v_{2}=(0,1,0,0,\dots )$. We obtain $%
u_{1V}=\left( 1,1,0,0,\dots \right) $ and $u_{2V}=(2,1,0,0,0,\dots )$.
Moreover, $\Vert u_{1}\Vert =\sqrt{6}$, $\Vert u_{2}\Vert =\sqrt{14},$ $%
\Vert u_{1V}\Vert =\sqrt{2}$, and $\Vert u_{2V}\Vert =\sqrt{5}$. Observe
that $\sup\limits_{{\rm span}\{w_1,w_2\}=V} \|w_1^*,w_2^*\|_g=\left\Vert w_1^*,w_2^*\right\Vert_{s}=1.$ Next%
\begin{equation*}
\left\Vert u_{1V},u_{2V}\right\Vert _{g}=\left\Vert u_{1V},u_{2V}\right\Vert
_{s}=\sqrt{10-9}=1
\end{equation*}%
and%
\begin{equation*}
\left\Vert u_{1},u_{2}\right\Vert _{g}=\left\Vert u_{1},u_{2}\right\Vert
_{s}=\sqrt{84-81}=\sqrt{3}.
\end{equation*}%
Thus $\cos ^{2}A_{g}(U,V)=\frac{1}{3}$, so that $A_{g}(U,V)=\arccos (\frac{1%
}{3}\sqrt{3})$.

\bigskip

\textbf{Acknowledgement}. The research is supported by ITB Research and
Innovation Program 2018.


\begin{thebibliography}{99}

\bibitem{Balestro} V. Balestro,  \`A.G. Horv\`ath, H. Martini, and R. Teixeira,
``Angles in normed spaces'', \emph{Aequationes Math.}
{\bf 91}--2 (2017), 201--236.

\bibitem{Ekariani1} S. Ekariani, H. Gunawan, and M. Idris, ``A contractive mapping
theorem on the $n$-normed space of $p$-summable'', \emph{JMA.} {\bf 4}--1 (2013), 1--7.

\bibitem{Gahler1}  S. G\"{a}hler, ``Lineare $2$-normierte R\"{a}ume'',
\emph{Math. Nachr.} {\bf 28} (1964), 1--43.

\bibitem{Giles} J. R. Giles, ``Classes of semi-inner-product spaces'',
 \emph{Trans. Amer. Math. Soc.} {\bf 129}--3 (1967), 436--446.

\bibitem{Gantmacher} F.R. Gantmacher, \emph{The Theory of Matrices},
AMS Chelsea Publishing {\bf Vol. 1} 2000.

\bibitem{Gunawan1} H. Gunawan and M. Mashadi, ``On finite-dimensional
$2$-normed space'', \emph{Soochow J. Math.} {\bf 27} (2001), 321--329.

\bibitem{Gunawan2} H. Gunawan and Mashadi, ``On $n$-normed spaces'',
\emph{Int. J. Math. Math. Sci.} {\bf 27} (2001), 631--639.

\bibitem{Gunawan3} H. Gunawan, J. Lindiarni, and O. Neswan,
``$P$-, $I$-, $g$-, and $D$-angles in normed spaces'',
\emph{J. Math. Fund. Sci.} {\bf 40}--1 (2008), 24--32.

\bibitem{Gunawan4} H. Gunawan, O. Neswan, and W. Setya-Budhi,
``A formula for angles between two subspaces of inner product spaces'',
\emph{Beitr. Algebra Geom.} {\bf 46}--2 (2005), 311--320.

\bibitem{Gunawan5} H. Gunawan, W. Setya-Budhi, S. Mashadi, and S. Gemawati,
``On volumes of $n$-dimensional parallelepipeds in $\ell^{p}$ spaces'',
\emph{Univ. Beograd Publ. Elektrotehn. Fak. Ser. Mat.} {\bf 16} (2005), 48--54.

\bibitem{Kurepa} S. Kurepa, ``On the Buniakowsky-Cauchy-Schwarz inequality'',
\emph{Glas. Mat. III Ser.} {\bf 21}--1 (1966), 147--158.

\bibitem{Milicic0} P.M. Mili\`ci\`c, ``On the Gram-Schmidt projection in normed spaces'',
\emph{Univ. Beograd Publ. Elektrotehn. Fak. Ser. Mat.} {\bf 4} (1993), 89--96.

\bibitem{Milicic1} P.M. Mili\`ci\`c, ``On the $B$-angle and $g$-angle in normed spaces'',
\emph{J. Inequal. Pure and Appl. Math.} {\bf 8}--3 (2007), 1--9.

\bibitem{Nur0} M. Nur, and H. Gunawan, ``A new orthogonality and angle in a normed space'',
\emph{Aequationes Math.} (2018), https://doi.org/10.1007/s00010-018-0582-3.

\bibitem{Nur1} M. Nur, H. Gunawan, and O. Neswan, ``A formula for the $g$-angle
between two subspaces of a normed space'', \emph{Beitr. Algebra Geom.} {\bf 59}-1 (2018),
133--143.

\end{thebibliography}
\end{document}